\newtheorem{thm}{Theorem}
\newtheorem{lemma}[thm]{Lemma}
\newtheorem{definition}[thm]{Definition}
\newtheorem{remark}[thm]{Remark}
\newtheorem{proposition}[thm]{Proposition}
\crefname{thm}{theorem}{theorems}
\Crefname{thm}{Theorem}{Theorems}
\newcommand{\norm}[1]{\ensuremath{\|#1\|}}
\newcommand{\dd}{\mathrm{d}}
\newcommand{\T}{\ensuremath{\mathbb{T}}} 
\newcommand{\R}{\ensuremath{\mathbb{R}}} 
\newcommand{\N}{\ensuremath{\mathbb{N}}} 
\newcommand{\E}{\ensuremath{\mathbb{E}}} 
\begin{document}

\title[Convergence for the kinetic Fokker-Planck equation]
{Convergence to equilibrium for the kinetic Fokker-Planck
  equation on the torus}
\author{Helge Dietert}
\address{Department of Pure Mathematics and Mathematical Statistics\\
University of Cambridge\\
Wilberforce Road\\
Cambridge CB3 0WA, UK}
\email{H.G.W.Dietert@maths.cam.ac.uk}
\thanks{The authors were supported by the UK Engineering and Physical
Sciences Research Council (EPSRC) grant EP/H023348/1 for the
University of Cambridge Centre for Doctoral Training, the Cambridge
Centre for Analysis.}
\author{Josephine Evans}
\address{Department of Pure Mathematics and Mathematical Statistics\\
University of Cambridge\\
Wilberforce Road\\
Cambridge CB3 0WA, UK}
\email{jahe2@cam.ac.uk}
\author{Thomas Holding}
\address{Department of Pure Mathematics and Mathematical Statistics\\
University of Cambridge\\
Wilberforce Road\\
Cambridge CB3 0WA, UK}
\email{T.J.Holding@maths.cam.ac.uk}
\keywords{convergence to equilibrium;
  hypocoercivity;
  Monge-Kantorovich-Wasserstein $\mathcal{W}_2$ distance;
  Fokker-Planck equation; torus;
  co-adapted couplings}
\subjclass[2010]{60J60;
  35Q84, 60H30, 82C31}
\begin{abstract}
  We study convergence to equilibrium for the kinetic Fokker-Planck
  equation on the torus. Solving the stochastic differential equation,
  we show exponential convergence in the Monge-Kantorovich-Wasserstein
  $\mathcal{W}_2$ distance. Finally, we investigate if such a
  coupling can be obtained by a co-adapted coupling, and show that
  then the bound must depend on the square root of the initial distance.
\end{abstract}
\maketitle

\section{Introduction}

The kinetic Fokker-Planck equation, also known as the Kramers equation,
is a basic model for the spreading of a solute due to interaction with
the fluid background. It is derived from Langevin dynamics, where the
time scale of observation is much larger than the correlation time of
the solute-fluid interactions (see e.g. \cite{zwanzig2001nonequilibrium}). In the context of fixed
random scatters the similar linear Landau equation can rigorously
be derived in the weak coupling limit, see
\cite{Kirkpatrick2009rigorous} and references within. We focus on the
case that the space variable is in the torus $\T = \mathbb{R}/(2\pi L\mathbb{Z})$ of
length $2\pi L$. The kinetic Fokker-Planck equation describes the law of a
particle moving in the phase space $\T \times \R$ whose location in
the phase space is $(X_t,V_t)$ and evolves as
\begin{equation}
  \label{eq:kinetic-fokker-planck-SDE}
  \left\{
    \begin{aligned}
      &\dd X_t = V_t \dd t, \\
      &\dd V_t = -\lambda V_t \dd t + \dd W_t,
    \end{aligned}
  \right.
\end{equation}
where $\dd W_t$ is a standard white noise. The corresponding measure
$\mu_t$ on $\T \times \R$ evolves as
\begin{equation}
  \label{eq:kfp}
  \partial_t \mu_t + v \partial_x \mu_t = \partial_v [ \lambda v
  \mu_t + \frac{1}{2} \partial_v \mu_t ],
\end{equation}
where this equation is considered in the weak sense.

We expect that the measure $\mu_t$ spreads out over time and
eventually reaches the uniform measure which is the unique stationary
state. The problem of convergence to equilibrium has been studied in
different metrics before (see
e.g. \cite{Villani-hypocoercivity-2006,Mouhot-Mischler-Exponential-stability-2014})
and forms a key example of hypocoercivity.

To the best of the authors' knowledge convergence in the
Monge-Kantorovich-Wasserstein (MKW) distance $\mathcal{W}_2$ has not
been solved and is the object of this paper.

The MKW distance comes from optimal transport and is defined as
\begin{equation*}
  \mathcal{W}_2(\mu, \nu) = \inf_{\pi \in \Pi_{\mu, \nu}}\left( \int
  |x-y|^2 \dd\pi(x,y) \right)^{1/2},
\end{equation*}
where $\Pi_{\mu,\nu}$ is the set of all couplings between $\mu$ and
$\nu$. This metric is very useful as it allows to understand the
Fokker-Planck equation as gradient flow \cite{jordon1998variational},
see \cite{villani2009optimal} for a general review.

In the spatially homogeneous case, i.e. only considering $V_t$, this
is an Orstein-Uhlenbeck process for which exponential convergence to
equilibrium has been proven in the Wasserstein distance
\cite{bolley2012convergence}. In a stochastic framework the
convergence can be proved by coupling the noise and using the fact
that the dependence on the initial data decays over time, which in an
analytic setting translates to a functional inequality for the time
derivative showing that the evolution is a contraction semigroup.

In the case where there is also a spatial variable, the same coupling
approach works if the spatial variable evolves in a confining
potential. However, in our case on the torus the spatial distance will
not decay if we just couple the velocities.

Solving the stochastic evolution, we are still able to show
exponential decay of the distance between two solutions.
\begin{thm}
  \label{thm:non-markovian-coupling}
  If $\mu_t$ and $\nu_t$ are two solutions to the kinetic
  Fokker-Planck equation \eqref{eq:kfp}, then we have
  \begin{equation*}
    \mathcal{W}_2(\mu_t, \nu_t) \leq \left( e^{-\lambda t} + c\,
      e^{-t/4\lambda^2 L^2} \right) \mathcal{W}_2(\mu_0, \nu_0)
  \end{equation*}
   for a constant $c$ only depending on $L$.
\end{thm}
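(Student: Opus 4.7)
The plan is to solve the stochastic differential equation~\eqref{eq:kinetic-fokker-planck-SDE} in closed form and then exploit the Gaussian structure of the resulting transition law together with the mixing induced by wrapping on the torus. Duhamel's formula yields the affine relation
\[
(X_t,V_t) = A_t\,(X_0,V_0)^\top + G_t,
\]
where $A_t$ is the deterministic transition matrix (off-diagonal entry $(1-e^{-\lambda t})/\lambda$, diagonal entries $1$ and $e^{-\lambda t}$) and $G_t=(G_t^X,G_t^V)$ is a centred Gaussian on $\R^2$ independent of the initial datum. Projecting the spatial coordinate to $\T$, the law $\mu_t$ equals the convolution of the pushforward $(A_t)_\#\mu_0$ with $\gamma_t:=\lawL(G_t)$, and analogously for $\nu_t$. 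Starting from an optimal $\mathcal{W}_2$-coupling of $(\mu_0,\nu_0)$, the problem reduces to bounding, pair by pair, the distance $\mathcal{W}_2\bigl(\gamma_t\ast\delta_{A_t(x_0,v_0)^\top},\,\gamma_t\ast\delta_{A_t(y_0,w_0)^\top}\bigr)$ on $\T\times\R$; the relative shift is $(\delta_X,\delta_V)$ with $\delta_V=(v_0-w_0)\,e^{-\lambda t}$ and $\delta_X=(x_0-y_0)+(v_0-w_0)(1-e^{-\lambda t})/\lambda$.

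To couple these two translates I would decompose $G_t^X=\alpha_t G_t^V+H_t$ with $H_t$ independent of $G_t^V$ and $\mathrm{Var}(H_t)=\sigma_{X|V}^2(t)\sim t/\lambda^2$ for $t$ large, then take the $G^V$-components synchronous (producing the deterministic velocity gap $|\delta_V|$, which upon integration yields the first term $e^{-\lambda t}\mathcal{W}_2(\mu_0,\nu_0)$) and couple the two copies of $H_t$ separately on $\T$ via the optimal transport between two wrapped Gaussians shifted by $\delta_X$. The key technical estimate I would establish is a Loeper-type bound on $\T=\R/(2\pi L\Z)$: if $\rho$ denotes the one-dimensional wrapped Gaussian of variance $\sigma^2=\sigma_{X|V}^2(t)$, then
\[
\mathcal{W}_2\bigl(\rho,\rho(\cdot-\delta_X)\bigr)\le C\,\min(|\delta_X|,L)\,\exp\bigl(-\sigma^2/(2L^2)\bigr),
\]
obtained from the Fourier decay $|\hat\rho(k)|\sim e^{-k^2\sigma^2/(2L^2)}$ combined with the $H^{-1}$--$\mathcal{W}_2$ inequality on $\T$, applicable once $\sigma/L$ is moderately large so that $\rho$ is bounded below by a positive multiple of $1/L$. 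Since $|\delta_X|\lesssim|x_0-y_0|+\lambda^{-1}|v_0-w_0|$, integrating against the optimal initial coupling then produces the second term $c\,e^{-t/(4\lambda^2 L^2)}\,\mathcal{W}_2(\mu_0,\nu_0)$.

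The hard part will be establishing the Loeper-type bound in a form sharp enough to close the argument: the $H^{-1}$--$\mathcal{W}_2$ inequality requires a positive lower bound on the density $\rho$, which only holds once $\sigma^2/L^2$ is moderately large, so the short-time regime must be handled separately, presumably by absorbing a trivial diameter estimate of order $L$ into the constant $c(L)$. A secondary but delicate point is that the pointwise factor $|\delta_X|$ in the Loeper bound must be converted into an $L^2$-quantity compatible with the quadratic nature of $\mathcal{W}_2$, which will require a careful averaging against the optimal initial coupling in order to preserve linearity of the final estimate in $\mathcal{W}_2(\mu_0,\nu_0)$.
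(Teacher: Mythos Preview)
Your overall architecture matches the paper's exactly: solve \eqref{eq:kinetic-fokker-planck-SDE} explicitly, condition the spatial noise on the velocity noise (your $G_t^X=\alpha_tG_t^V+H_t$ is the paper's $(A_t,B_t)$ decomposition), couple velocities synchronously, and then couple two copies of the wrapped Gaussian law of $H_t$ on $\T$ shifted by $\delta_X$, exploiting that its variance grows like $t/\lambda^2$. The only genuine difference is in this last spatial step. You reach for the $H^{-1}$--$\mathcal{W}_2$ (Loeper/Peyre) inequality together with Fourier decay of the wrapped Gaussian; the paper instead uses the same pointwise lower bound $Qh\ge\beta/(2\pi L)$ to \emph{split} $Qh=\beta\cdot(\text{uniform})+(1-\beta)s$ for a probability density $s$, and then with probability $\beta$ sends both particles to a common uniform point while otherwise giving them the same sample of $s$, yielding $\E|X_t^1-X_t^2|_\T^2\le(1-\beta)|\delta_X|_\T^2$ in one line with no transport-inequality machinery. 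Your two flagged difficulties are lighter than you fear: the short-time regime is indeed handled by a trivial diameter bound absorbed into $c$, and the ``$L^2$ averaging'' is immediate once you square the Loeper bound to get $\mathcal{W}_2^2\le C|\delta_X|_\T^2\,e^{-\sigma^2/L^2}$, use $|\delta_X|_\T^2\le 2|x_0-y_0|_\T^2+2\lambda^{-2}|v_0-w_0|^2$, and integrate against the optimal initial coupling. As a side benefit, the $H^{-1}$ route actually delivers the sharper rate $e^{-t/(2\lambda^2L^2)}$ for $\mathcal{W}_2$ rather than the paper's $e^{-t/(4\lambda^2L^2)}$, because the splitting coupling is not the optimal transport between two shifted wrapped Gaussians.
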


The key idea is that, after fixing the net effect of the velocity
noise, the spatial variable has enough randomness left to allow such a
coupling. This approach is not based on a functional inequality which
is integrated over time and in fact the evolution is not a contraction
semigroup.
\begin{thm}
  \label{thm:no-contraction}
  There exists no $\gamma > 0$ such that for all solutions $\mu_t$ and
  $\nu_t$ to the kinetic Fokker-Planck equation \eqref{eq:kfp} we have
  \begin{equation*}
    \mathcal{W}_2(\mu_t, \nu_t)
    \leq e^{-\gamma t}\, \mathcal{W}_2(\mu_0, \nu_0)
  \end{equation*}
  for all $t \ge 0$.
\end{thm}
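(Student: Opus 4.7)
The plan is to exhibit, for any prospective rate $\gamma>0$, initial data and a short time $t_\star>0$ violating the contraction. I take the Dirac masses $\mu_0=\delta_{(0,0)}$ and $\nu_0=\delta_{(x_0,0)}$ with $x_0\in(0,\pi L)$ to be chosen small in terms of $\gamma$; then $\mathcal{W}_2(\mu_0,\nu_0)=x_0$. The heuristic is that the noise in \eqref{eq:kinetic-fokker-planck-SDE} acts only on velocity, so it takes a time of order $x_0^2$ for spatial mixing to reduce the distance --- consistent with the slow exponent $1/(4\lambda^2 L^2)$ appearing in \Cref{thm:non-markovian-coupling} --- and this prevents the decay rate at $t=0^+$ from reaching any prescribed $\gamma>0$.

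For the upper bound, spatial translation invariance of \eqref{eq:kfp} on $\T$ gives $\nu_t=\tau_{x_0}\mu_t$, so pushing $\mu_t$ through the map $(x,v)\mapsto(x+x_0,v)$ yields a coupling of pointwise cost $x_0^2$ and hence $\mathcal{W}_2(\mu_t,\nu_t)\le x_0$ for all $t\ge 0$. For the matching lower bound I combine $\mathcal{W}_2\ge\mathcal{W}_1$ with Kantorovich--Rubinstein duality applied to the $1$-Lipschitz test function $f(x,v)=L\sin(x/L)$. Writing $(X_t,V_t)$ for the SDE solution from $(0,0)$, translation invariance makes the solution from $(x_0,0)$ equal to $(X_t+x_0,V_t)$; the symmetry of $X_t$ together with the identity $\sin A-\sin(A+B)=-\sin B\cos A+(1-\cos B)\sin A$ then collapse the difference to
\[
\int f\,\dd\mu_t-\int f\,\dd\nu_t = -L\sin(x_0/L)\,\E[\cos(X_t/L)].
\]
Solving \eqref{eq:kinetic-fokker-planck-SDE} directly gives $\E[X_t^2]=t^3/3+O(t^4)$ for small $t$, and hence
\[
\mathcal{W}_2(\mu_t,\nu_t)\,\ge\,L\sin(x_0/L)\Bigl(1-\tfrac{t^3}{6L^2}+O(t^4/L^4)\Bigr).
\]

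Combining this with the estimates $L\sin(x_0/L)\ge x_0-x_0^3/(6L^2)$ and $e^{-\gamma t}\le 1-\gamma t+\gamma^2t^2/2$, a contraction at rate $\gamma$ would force
\[
\gamma t_\star-\tfrac{\gamma^2 t_\star^2}{2} \,\le\, \tfrac{x_0^2}{6L^2}+\tfrac{t_\star^3}{6L^2}+\text{higher-order terms}.
\]
Setting $t_\star=x_0^2/(\gamma L^2)$ makes the left-hand side equal to $5x_0^2/(6L^2)+O(x_0^4)$, while every error term on the right-hand side is $O(x_0^4)$; so for $x_0$ sufficiently small depending on $\gamma$ and $L$, the inequality fails and the contraction is violated. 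Since $\gamma>0$ was arbitrary, the theorem follows. The main obstacle is this last piece of book-keeping: the Taylor remainders coming from $L\sin(x_0/L)$ and from $e^{-\gamma t}$ must be controlled simultaneously, which is what pins down the characteristic scaling $t_\star\sim x_0^2/\gamma$ and forces $x_0$ to be chosen small.
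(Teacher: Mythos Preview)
Your argument is correct and takes a genuinely different route from the paper's. Both proofs start from the same Dirac initial data $\mu_0=\delta_{(0,0)}$, $\nu_0=\delta_{(x_0,0)}$ and both exploit that the spatial variance $\Sigma_{AA}(t)$ grows like $t^3/3$ for small $t$, so the Wasserstein distance cannot decay linearly in $t$ at $t=0^+$. The difference is in how the lower bound on $\mathcal{W}_2(\mu_t,\nu_t)$ is obtained. The paper keeps the initial separation fixed, uses Gaussian tail bounds to show that most of the mass of $\mu_t$ and $\nu_t$ remains in small neighbourhoods of the starting points, and then lower-bounds $\mathcal{W}_2$ by the cost of transporting that trapped mass; this requires a somewhat delicate choice $d=|a-b|_\T\,t^{3/2}$ for the neighbourhood radius. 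You instead invoke translation invariance $\nu_t=\tau_{x_0}\mu_t$ and Kantorovich--Rubinstein duality with the explicit $1$-Lipschitz test function $f(x,v)=L\sin(x/L)$, which reduces the lower bound to the exact formula $L\sin(x_0/L)\,\E[\cos(X_t/L)]=L\sin(x_0/L)\,e^{-\Sigma_{AA}(t)/(2L^2)}$. This is cleaner and avoids the tail estimates entirely; the price is that the fixed loss $x_0-L\sin(x_0/L)\sim x_0^3/(6L^2)$ forces you to take $x_0$ small in terms of $\gamma$ and then pick $t_\star\sim x_0^2/\gamma$, whereas the paper's argument works at any fixed separation.

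One small presentational slip: with $t_\star=x_0^2/(\gamma L^2)$ the left-hand side $\gamma t_\star-\tfrac{\gamma^2 t_\star^2}{2}$ equals $x_0^2/L^2+O(x_0^4)$, not $5x_0^2/(6L^2)$; presumably you meant the left-hand side \emph{minus} the leading $x_0^2/(6L^2)$ from the right. Either way the conclusion stands, since $x_0^2/L^2$ dominates $x_0^2/(6L^2)$ and all remaining terms are $O(x_0^4)$.
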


This shows that the generator is not coercive but only hypocoercive in
$\mathcal{W}_2$.

In probability theory a classical approach to such convergence results
is the construction of a coupling \cite{Lindvall1992}. For this, random
variables $(X^i_t,V^i_t)$ are constructed for $t \in \R^+$ and $i=1,2$
such that $(X^1_t,V^1_t)$ has law $\mu_t$ and $(X^2_t,V^2_t)$ has law
$\nu_t$. Then for $t \in \R^+$ the coupling
$((X^1_t,V^1_t),(X^2_t,V^2_t))$ gives an upper bound of the MKW
distance $\mathcal{W}_2(\mu_t,\nu_t)$.

The stochastic differential equation
\eqref{eq:kinetic-fokker-planck-SDE} motivates to look at couplings
where $(X^i_t,V^i_t)$ are continuous Markov processes with initial
distribution $\mu_0$ and $\nu_0$, respectively, and whose transition
semigroup is determined by \eqref{eq:kinetic-fokker-planck-SDE}. For
such couplings we can consider a more restrictive class of couplings.
\begin{definition}[co-adapted coupling]
  The coupling $((X^1_t,V^1_t),(X^2_t,V^2_t))$ is co-adapted if, for
  $i = 1,2$, under the filtration $\mathcal{F}$ generated by the
  coupling $((X^1_t,V^1_t),(X^2_t,V^2_t))$, the process $(X^i_t,V^i_t)$
  is a continuous Markov process whose transition semigroup is
  determined by \eqref{eq:kinetic-fokker-planck-SDE}.
\end{definition}

This is an important subclass of couplings, which contains many
natural couplings, and an even more restrictive subclass is the class of
Markovian couplings, where additionally the coupling itself is imposed
to be Markovian. The existence and obtainable convergence behaviour
under this restriction has already been studied in different cases,
e.g. \cite{kuwada2009characterization,krzysztof2000efficient,chen1994optimal}. Note
that the co-adapted coupling is equivalent to the condition that the
filtration generated by $(X^i_t,V^i_t)$ is immersed in the filtration
generated by the coupling, which motivates Kendall \cite{Kendall2015}
to call such couplings \emph{immersed couplings}.

By adapting the reflection/synchronisation coupling, we can still
obtain exponential convergence but with a loss  in dependence on the initial data.
\begin{thm}\label{thm:markovian-coupling-existence}
  Given initial distributions $\mu_0$ and $\nu_0$, there exists a
  co-adapted coupling $((X_t^1,V_t^1),(X_t^2,V_t^2))$ such that
 \begin{align*}
   \mathcal{W}_2(\mu_t, \nu_t)
   &\le
     \left(\E\left[|X^1_t-X^2_t|_{\T}^2+(V^1_t-V^2_t)^2\right]\right)^{1/2} \\
   &\le C \beta (t) (\sqrt{\mathcal{W}_2(\mu_0, \nu_0)}+\mathcal{W}_2(\mu_0,\nu_0)),
  \end{align*}
  where
  \begin{equation*}
    \beta(t)=\begin{cases}
      e^{-\min(2\lambda,1/(2\lambda^2 L^2))t}& 4L^2\lambda^3\ne1\\
      e^{-2\lambda t}(1+t)& 4L^2\lambda^3=1
    \end{cases}
  \end{equation*}
  and $C$ is a constant that depends only on $\lambda$ and $L$.
\end{thm}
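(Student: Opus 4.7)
The plan is to construct an explicit co-adapted coupling of the two kinetic Fokker--Planck dynamics and to control the evolution of the associated difference process through a Lyapunov argument, projecting to the torus only at the end.

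Fix an optimal $\mathcal{W}_2$-coupling of $\mu_0$ and $\nu_0$ and lift it to $\R$. Any co-adapted coupling is determined by a progressively measurable correlation $\rho_t \in [-1,1]$ of the two driving Brownian motions, which yields
\begin{equation*}
  dZ_t = U_t \, \dd t, \qquad dU_t = -\lambda U_t \, \dd t + \sqrt{2(1-\rho_t)} \, dB_t
\end{equation*}
for $(Z_t, U_t) := (X^1_t - X^2_t, V^1_t - V^2_t)$ and some Brownian motion $B_t$. I would use a reflection coupling ($\rho_t = -1$) of the velocity noise during an initial phase, with the sign chosen to drive an appropriate linear combination of $Z$ and $U$ towards zero, and then synchronous coupling ($\rho_t = +1$) afterwards to freeze the coupled system once alignment has been achieved.

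The analytic core of the argument is to exhibit a Lyapunov function $\Phi(z, u)$ on $\R^2$, depending on $\lambda$ and $L$, such that $\E[\Phi(Z_t, U_t)]$ decays exponentially at rate $\min(2\lambda, 1/(2\lambda^2 L^2))$, with a $(1+t)$ prefactor at the resonance $4L^2\lambda^3 = 1$ where a non-trivial Jordan block arises in the associated linear ODE for second moments. The first rate is the natural velocity relaxation rate; the second is the rate at which $\int_0^t U_s \, \dd s$ traverses one fundamental domain $[0, 2\pi L)$ of the torus and so allows the torus distance to collapse. The form $\Phi$ should combine the quadratic terms $z^2, zu, u^2$ (governing the main exponential decay) with terms linear in $|z|$ and $|u|$ (arising from the diffusive contribution in the reflection phase, whose integrated fluctuations scale as $|U_0|$ rather than $|U_0|^2$). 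This mixed linear/quadratic structure of $\Phi$ is precisely what produces the sum $\sqrt{\mathcal{W}_2(\mu_0, \nu_0)} + \mathcal{W}_2(\mu_0, \nu_0)$ after taking a square root and dominating $\E[|Z_0| + |U_0|]$ by $\mathcal{W}_2(\mu_0,\nu_0)$ via Jensen's inequality.

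The torus distance enters only through the crude bound $|Z_t|_\T \le |Z_t|$; the co-adapted constraint rules out the sort of initial-condition-dependent shift that appears in \Cref{thm:non-markovian-coupling} and would otherwise recover a fully linear dependence on the initial Wasserstein distance. The main obstacle I foresee is the exact tuning of $\Phi$ and of the reflection direction so that the coupling achieves the precise rate $\min(2\lambda, 1/(2\lambda^2 L^2))$ and cleanly captures the resonance case; once the right $\Phi$ is identified, the remaining steps reduce to It\^o's formula applied in the reflection phase and a Gronwall-type comparison against the linear ODE satisfied by the moments.
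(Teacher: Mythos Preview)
Your proposal has a genuine gap: the plan to work entirely on $\R^2$ and invoke the torus only through the crude bound $|Z_t|_\T \le |Z_t|$ cannot succeed. On $\R$ the pair $(Z_t,U_t)$ has no mechanism for the position difference to decay. In the synchronous phase ($\rho_t=1$) the dynamics are deterministic, $U_t\to 0$ exponentially and $Z_t\to Z_0+\int_0^\infty U_s\,\dd s$, a typically nonzero constant. In the reflection phase the only drift-free linear combination of $Z$ and $U$ is (up to scaling) $Y=Z+\lambda^{-1}U$, which then satisfies $\dd Y_t=(2/\lambda)\,\dd B_t$ and is a Brownian motion on $\R$: it diffuses rather than converges. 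So no $\R^2$-valued Lyapunov function $\Phi$ of the type you describe can have $\E\Phi(Z_t,U_t)\to 0$, and in particular the rate $1/(2\lambda^2L^2)$ --- which depends on $L$ --- cannot emerge from a computation that never uses the torus.

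What the paper does, and what your sketch is missing, is to exploit the torus geometry in an essential way rather than only at the end. Passing to the drift-corrected coordinate $Y=X+\lambda^{-1}V$ turns the position difference $M_t=Y^1_t-Y^2_t$ into a (rescaled) Brownian motion on $\T$; reflection coupling then makes $M_t$ hit $0$ in finite time $T$ precisely because the torus has finite diameter, with $\mathbb{P}(T>t)$ governed by the Dirichlet heat kernel on $[0,2\pi L]$ --- this is where $1/(2\lambda^2L^2)$ comes from. The $\sqrt{\mathcal{W}_2}$ dependence arises because $\mathbb{P}(T>t)\lesssim |M_0|_\T\,e^{-t/(2\lambda^2L^2)}$ is linear in $|M_0|_\T$, and $\E|M_t|_\T^2\le(\pi L)^2\,\mathbb{P}(T>t)$; taking square roots then yields $\sqrt{|M_0|_\T}$. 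Your intuition about mixed linear/quadratic behaviour is in the right spirit, but the linearity comes from the exit-time estimate on a bounded interval, not from inserting linear terms into a Euclidean Lyapunov function.
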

Here we used the notation $|X^1_t-X^2_t|_{\T}$ to emphasis that this
is the distance on the torus $\T$. In fact the filtration generated by
$(X^1,V^1)$ and $(X^2,V^2)$ agree which Kendall \cite{Kendall2015}
calls an equi-filtration coupling.
\begin{remark}
  This achieves the same exponential decay rate as the non-Markovian
  argument, except for the case $4L^2\lambda^3=1$, when the spatial
  and velocity decay rates coincide and we have an addition polynomial
  factor.
\end{remark}

In general the loss in the dependence is necessary.
\begin{thm}\label{thm:markovian-coupling-optimality}
  Suppose there exists a function $\alpha : \R^+ \mapsto \R^+$
  and a constant $\gamma > 0$ such that for all initial distributions
  $\mu_0$ and $\nu_0$ there exists a co-adapted coupling
  $((X_t^1,V_t^1),(X_t^2,V_t^2))$ such that
  \begin{equation*}
    \left(
      \mathbb{E}\left[ |X_t^1-X_t^2|^2_{\mathbb{T}} + (V_t^1-V_t^2)^2
      \right]
    \right)^{1/2}
    \le \alpha(\mathcal{W}_2(\mu_0,\nu_0)) e^{-\gamma t}.
  \end{equation*}
  Then there exists a constant $C$ such that for $z \in (0,\pi L]$
  we have the following lower bound on the dependence on the initial
  distance
  \begin{equation*}
    \alpha(z) \ge C \sqrt{z}.
  \end{equation*}
\end{thm}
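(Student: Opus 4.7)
The plan is to test the hypothesis against a family of Dirac initial conditions at torus distance $z$ and to derive the $\sqrt{z}$ lower bound by playing two estimates on the variance of the lifted spatial difference against each other. Concretely I will take $\mu_0 = \delta_{(0,0)}$ and $\nu_0 = \delta_{(-z,0)}$, so that $\wW_2(\mu_0,\nu_0) = z$, and write $\alpha = \alpha(z)$. Introducing the continuous $\R$-lift $Z_t := X^1_t - X^2_t$ and $U_t := V^1_t - V^2_t$, the co-adapted property forces each $(X^i,V^i)$ to solve the Langevin SDE in the joint filtration, so $\dd Z_t = U_t\,\dd t$ and $\dd U_t = -\lambda U_t\,\dd t + \dd M_t$, where $M := W^1 - W^2$ is a mean-zero continuous martingale. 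Using $U_0 = 0$, this gives $\E[U_t] = 0$ for all $t \ge 0$, and hence $\E[Z_t] = -z$: this frozen mean is the core algebraic constraint that the rest of the argument exploits.

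Next, I play two bounds on $\mathrm{Var}(Z_t)$ against each other. From above, Minkowski's integral inequality applied to $Z_t - \E[Z_t] = \int_0^t U_s\,\dd s$ combined with the hypothesis $\E[U_s^2] \le \alpha^2 e^{-2\gamma s}$ gives
\begin{equation*}
  \sqrt{\mathrm{Var}(Z_t)} \le \int_0^t \sqrt{\E[U_s^2]}\,\dd s \le \frac{\alpha}{\gamma},
\end{equation*}
uniformly in $t$. From below, the hypothesis $\E[|Z_t|_\T^2] \le \alpha^2 e^{-2\gamma t}$ forces $Z_t$ to cluster sharply near the lattice $2\pi L \Z$ once $t$ is large, which is incompatible with the frozen mean $\E[Z_t] = -z$ unless a non-trivial fraction of the mass has wrapped around; this will yield a lower bound on $\mathrm{Var}(Z_t)$ of order $Lz$.

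To make the wrapping step precise I decompose $Z_t = 2\pi L K_t + \tilde Z_t$ with $K_t \in \Z$ and $|\tilde Z_t| = |Z_t|_\T \le \pi L$, and choose $t_*$ so that $\alpha e^{-\gamma t_*} = z/2$ (admissible because the initial-time inequality already forces $\alpha \ge \wW_2(\mu_0,\nu_0) = z$). At $t = t_*$ the hypothesis gives $\sqrt{\E[\tilde Z_{t_*}^2]} \le z/2$, which combined with $2\pi L\,\E[K_{t_*}] + \E[\tilde Z_{t_*}] = -z$ and Jensen yields $|\E[K_{t_*}]| \ge z/(4\pi L)$; since $K_{t_*}$ is integer-valued, $\E[K_{t_*}^2] \ge \E[|K_{t_*}|] \ge z/(4\pi L)$. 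Expanding $\E[Z_{t_*}^2] = 4\pi^2 L^2 \E[K_{t_*}^2] + 4\pi L\,\E[K_{t_*}\tilde Z_{t_*}] + \E[\tilde Z_{t_*}^2]$ and controlling the cross term by Cauchy-Schwarz then produces $\mathrm{Var}(Z_{t_*}) \ge c L z$ for an explicit constant $c > 0$, valid whenever $z$ lies below a fixed multiple of $L$. Combining with the Minkowski bound at $t = t_*$ gives $\alpha(z) \ge C\sqrt{Lz}$ in the small-$z$ regime, while for $z$ above the threshold the trivial inequality $\alpha(z) \ge z$ already forces $\alpha(z) \ge C'\sqrt{z}$ because $\sqrt{z} \le \sqrt{\pi L}$. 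I expect the delicate point to be the control of the cross term $\E[K_{t_*}\tilde Z_{t_*}]$: it has to be sharp enough that the dominant contribution $4\pi^2 L^2 \E[K_{t_*}^2]$ survives at the order $Lz$ rather than being cancelled, which is what dictates the specific choice $\alpha e^{-\gamma t_*} = z/2$.
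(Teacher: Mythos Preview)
Your argument is correct and takes a genuinely different route from the paper's. The paper first passes to the drift-corrected position $Y=X+\lambda^{-1}V$, which under a co-adapted coupling becomes a Brownian motion on $\T$ in the joint filtration, and then proves a general lower bound (Proposition~\ref{prop:lower-bound-for-markovian-couplings}) for coupled torus Brownian motions: it introduces the stopping time when $W^1-W^2$ exits $(az,\pi L)$, uses optional stopping to see that the far boundary is hit with probability $\asymp z/L$, and then invokes an auxiliary lower bound (Lemma~\ref{lem:trivial-lower-bound-for-couplings}) on $\E|W^1_t-W^2_t|_\T^2$ after that time; integrating in $t$ against the assumed decay yields $\alpha(z)^2\gtrsim z$.

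Your approach avoids both the change of variables and the stopping-time machinery: you work directly with the $\R$-lift $Z_t$ of the position difference, observe that $\E[Z_t]$ is frozen at $z$ because $\E[U_t]=0$, bound $\mathrm{Var}(Z_t)\le(\alpha/\gamma)^2$ via Minkowski, and then force $\mathrm{Var}(Z_{t_*})\gtrsim Lz$ by the winding-number argument $\E[K^2]\ge\E|K|\ge|\E K|\gtrsim z/L$. This is more elementary and makes the mechanism for the $\sqrt{z}$ loss very transparent (the integer-valuedness of $K$ converts a first-moment constraint of size $z/L$ into a second-moment lower bound of the same size). The paper's route, on the other hand, packages the result as a statement about coupled Brownian motions valid for any integrable decay profile $\beta$, not just $e^{-2\gamma t}$; your choice of $t_*$ via $\alpha e^{-\gamma t_*}=z/2$ is tied to the exponential form, though the idea would adapt to other $\beta$ with minor changes. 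One small remark: with your conventions $Z_0=X^1_0-X^2_0=z$, not $-z$, but only $|\E[Z_t]|=z$ is used so this is harmless.
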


The idea is to focus on a drift-corrected position on the torus, which
evolves as a Brownian motion. By stopping the Brownian motion at a
large distance we can then prove the claimed lower bound.

This shows that a simple hypocoercivity argument on a Markovian
coupling cannot work. Precisely, there cannot exist a semigroup $P$ on
the probability measures over $(\T \times \R)^{\times 2}$, whose
marginals behave like the solution of
\eqref{eq:kinetic-fokker-planck-SDE} and which satisfies
$H(P_t(\pi)) \le c H(\pi) e^{-\gamma t}$ for
$H^2(\pi) = \int [(X^1-X^2)^2 + (V^1-V^2)^2] \dd
\pi(X^1,V^1,X^2,V^2)$.
Otherwise, the Markov process associated to $P$ would be a coupling
contradicting \Cref{thm:markovian-coupling-optimality}.

\subsection{Acknowledgements}

The authors would like to thank Clément Mouhot for the initial
discussion to look into the problem.

\section{Set up}
The stochastic differential equation
\eqref{eq:kinetic-fokker-planck-SDE} has the explicit solution
\begin{equation}\label{eq:setup-explicit-solution}
\begin{aligned}
X_t &= X_0 + \frac{1}{\lambda}(1-e^{-\lambda t})V_0 + \int_0^t
      \frac{1}{\lambda}(1-e^{-\lambda(t-s)}) \dd W_s,\\
V_t&= e^{-\lambda t} V_0 + \int_0^t e^{-\lambda(t-s)} \dd W_s,
\end{aligned}
\end{equation}
where $W_t$ is the common Brownian motion. In this we separate the
stochastic driving as $(A_t,B_t)$ given by the stochastic integrals
\begin{equation*}
\begin{aligned}
A_t &= \int_0^t \frac{1}{\lambda} (1-e^{-\lambda(t-s)}) \dd W_s,\\
B_t &= \int_0^t e^{-\lambda (t-s)} \dd W_s,
\end{aligned}
\end{equation*}
which evolve over $\R$ with the common Brownian motion $W_t$. By Itō's
isometry $(A_t,B_t)$ is a Gaussian random variable with covariance
matrix $\Sigma(t)$ given by
\begin{align}
\label{eq:covariance-aa}
\Sigma_{AA}(t) &= \frac{1}{\lambda^2} \left[ t - \frac{2}{\lambda} ( 1- e^{-\lambda t}) + \frac{1}{2\lambda} ( 1- e^{-2\lambda t}) \right],\\
\label{eq:covariance-ab}
\Sigma_{AB}(t) &= \frac{1}{\lambda^2} \left[ (1-e^{-\lambda t} ) - \frac{1}{2} ( 1- e^{-2\lambda t}) \right],\\
\label{eq:covariance-bb}
\Sigma_{BB}(t) &= \frac{1}{2\lambda}(1-e^{-2\lambda t}).
\end{align}
From this we calculate that the conditional distribution of $A_t$
given $B_t$ is a Gaussian with variance
$\Sigma_{AA}(t)-2\Sigma^2_{AB}(t)\Sigma^{-1}_{BB}(t)$ and mean given
by
\begin{equation*}
  \mu_{A|B}(t,b) = \Sigma_{AB}(t)\Sigma^{-1}_{BB}(t)b.
\end{equation*}
We write $g_{A|B}$ for the conditional density of $A$ given $B$ and
$g_B$ for the marginal density of $B$. Hence
\begin{equation}
  \label{eq:g-definition}
  g(t,a,b)=g_{A|B}(t,a,b)g_{B}(t,b)
\end{equation}
is the joint density of $A$ and $B$.

The last part of the set up is the change of variables we will need
for the Markovian coupling. We define new coordinates $(Y,V)$ by
taking the drift away
\begin{equation}
  \label{eq:drift-correct-coordinates}
  \left\{
    \begin{aligned}
      Y&=X+\frac1\lambda V,\\
      V&=V.
    \end{aligned}
  \right.
\end{equation}
The motivation for this change is the explicit formulas found in
\eqref{eq:setup-explicit-solution} from which we see that $Y$ is the
limit as $t \rightarrow \infty$ of $X_t$ without additional noise.  In
the new variables, \eqref{eq:kinetic-fokker-planck-SDE} becomes
\begin{equation*}
  \left\{
    \begin{aligned}
      \dd Y_t &= \frac1\lambda \dd W_t, \\
      \dd V_t &= -\lambda V_t \dd t + \dd W_t,
    \end{aligned}
  \right.
\end{equation*}
for the common Brownian motion $W_t$. Note that the motion of $Y_t$
does not depend explicitly upon $V_t$ and is a Brownian motion on the
torus.

It remains to show that these new coordinates define an equivalent
norm on $\mathbb{T}\times\mathbb{R}$. This follows from the triangle
inequality and we have
\begin{equation*}
  |X^1-X^2|_\T+|V^1-V^2|\le |Y^1-Y^2|_\T+\left(1+\frac1\lambda\right)|V^1-V^2|
\end{equation*}
and the other direction is similar. Thus, the two norms are equivalent
up to a constant factor that depends only on $\lambda$.

\section{Non-Markovian Coupling}
We wish to estimate how much the spatial variable will spread out over
time. We will then use this to construct a coupling at a fixed time
$t$ which exploits the fact that a proportion of the spatial density
is distributed uniformly. In order to do this we give a lemma on the
spreading of a Gaussian density wrapped on the torus.
\begin{lemma}
  \label{thm:spreading-estimate}
For $\sigma > 0$ consider the Gaussian density $h$ on $\mathbb{R}$ given by
\[ h(x) = \frac{1}{\sqrt{2\pi \sigma^2}} e^{-x^2/2\sigma^2} \] and wrap it onto the torus $\mathbb{T}$, i.e. define the density $Qh$ on $\mathbb{T}$ by
\begin{equation}
\label{eq:Q-definition}
(Qh)(x) = \sum_{n \in \mathbb{Z}} h(x+2\pi L n).
\end{equation}
We have the following estimate on the spatial spreading
\[ Qh(x) \geq \frac{\beta}{2\pi L}  \] where
\[ 1-\beta = \frac{2 e^{-\sigma^2/2L^2}}{1-e^{-\sigma^2/2L^2}}. \]
\end{lemma}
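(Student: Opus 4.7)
The plan is to exploit Poisson summation. The wrapped Gaussian $Qh$ is a smooth periodic function on $\T$, so it equals its Fourier series, and the Fourier coefficients are (up to the period factor) the values of the Fourier transform $\hat h$ at the dual lattice points $k/L$. Since $\hat h(\xi) = e^{-\sigma^2 \xi^2/2}$, Poisson summation gives
\begin{equation*}
  Qh(x) = \frac{1}{2\pi L} \sum_{k\in\Z} e^{-k^2 \sigma^2/(2L^2)}\, e^{ikx/L}.
\end{equation*}

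The next step is to split off the $k=0$ term, which contributes exactly the uniform density $1/(2\pi L)$, and to bound the remainder uniformly in $x$ by its absolute value:
\begin{equation*}
  \left| \sum_{k\neq 0} e^{-k^2 \sigma^2/(2L^2)} e^{ikx/L} \right|
  \le 2 \sum_{k=1}^{\infty} e^{-k^2 \sigma^2/(2L^2)}.
\end{equation*}

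To match the form of $\beta$ in the statement, I would use the crude but sharp-enough inequality $k^2 \ge k$ for $k\ge 1$ to dominate the Gaussian tail by a geometric series with ratio $r = e^{-\sigma^2/(2L^2)}$:
\begin{equation*}
  2\sum_{k=1}^{\infty} e^{-k^2 \sigma^2/(2L^2)} \le 2\sum_{k=1}^{\infty} r^k = \frac{2 e^{-\sigma^2/(2L^2)}}{1 - e^{-\sigma^2/(2L^2)}} = 1-\beta.
\end{equation*}
Combining the two displays yields $Qh(x) \ge (1 - (1-\beta))/(2\pi L) = \beta/(2\pi L)$, as required.

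The proof is essentially routine once Poisson summation is invoked; the only minor point to be careful about is the dual-lattice convention (the torus of period $2\pi L$ has dual lattice $(1/L)\Z$), and the use of $k^2\ge k$ to replace the Gaussian sum by a geometric series so that the final bound matches $1-\beta$ exactly. No step is a genuine obstacle.
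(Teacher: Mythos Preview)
Your proof is correct and essentially identical to the paper's own argument: the paper also computes the Fourier coefficients of $Qh$ (which is exactly Poisson summation), splits off the $k=0$ term, bounds the remaining sum in absolute value, and uses $k^2\ge k$ to reduce to a geometric series yielding $1-\beta$. The only cosmetic difference is that you name Poisson summation explicitly while the paper derives the Fourier coefficients by direct integration.
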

\begin{proof}
  By the definition of $Q$, the Fourier transform of $Qh$ is for
  $k \in \N$ given by
  \begin{align*}
    (\mathcal{F}Qh)(k) &= \int_{\mathbb{T}} \sum_{n \in \mathbb{Z}}
                         h(x+2\pi L n) e^{ikx/L}\dd x \\
                       &= \int_{\mathbb{R}} h(x) e^{ikx/L} \dd x\\
                       &=\exp \left( -\frac{k^2 \sigma^2}{2L^2} \right)
  \end{align*}
  where we have used the well-known Fourier transformation of a
  Gaussian.

By the Fourier series we find that, for any $x \in \mathbb{T}$, we have
\[ Qh(x)-\frac{\beta}{2\pi L} = \frac{1}{2\pi L} \sum_{|k| \geq 1}
e^{-k^2 \sigma^2/2L^2 - ikx/L} + \frac{1-\beta}{2\pi L}. \]
We want this to be positive. Therefore it is sufficient to show that
\[ \left| \sum_{|k| \geq 1} e^{-k^2 \sigma^2 /2L^2 -ikx/L}\right| \leq 1-\beta. \]
We estimate the left hand side by
\[ \left| \sum_{|k| \geq 1} e^{-k^2 \sigma^2 /2L^2 -ikx/L}\right| \leq 2\sum_{k \geq 1} e^{-k\sigma^2/2L^2} =1-\beta\]
where the final equality follows from summing the geometric series.
\end{proof}
We can now use this to construct a coupling at time $t$. We will use this coupling to prove exponential decrease in the Wasserstein distance.

\begin{lemma}
  \label{thm:coupling-construction}
  Let $\mu_0, \nu_0$ be probability distributions on
  $\mathbb{T} \times \mathbb{R}$ and let
  $((X_0^1, V_0^1), (X_0^2, V_0^2))$ be a coupling between them. Let
  $t \geq 0$ and $\beta>0$ be such that for all $b\in\mathbb{R}$,
  \begin{equation*}
    (Qg_{A|B}(t,\cdot,b))(a) \geq \frac{\beta}{2\pi L},
  \end{equation*}
  where $g_{A|B}$ and $Q$ are defined by \eqref{eq:g-definition} and \eqref{eq:Q-definition} respectively. Furthermore, let $\mu_t$ respectively $\nu_t$ be the distribution of
  the solution to the Fokker-Plank equation \eqref{eq:kfp} with
  initial data $\mu_0$ and $\nu_0$ respectively after time t. Then
  there exists a coupling $((X_t^1, V_t^1), (X_t^2, V_t^2))$ between
  $\mu_t$ and $\nu_t$ satisfying
  \[ \mathbb{E}\left[(V_t^1-V_t^2)^2 \right] = e^{-2\lambda t} \mathbb{E} \left[ (V_0^1-V_0^2)^2  \right] \]
  and
  \begin{equation*}
    \mathbb{E}\left[ |X_t^1 - X_t^2|_\T^2 \right] \leq  2(1-\beta) \mathbb{E}\left[ |X_0^1 - X_0^2|_\T^2 +\frac{1}{\lambda^2}(V_0^1-V_0^2)^2 \right].
  \end{equation*}
\end{lemma}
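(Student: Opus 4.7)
The plan is a two-step maximal/synchronous coupling: drive the velocity noise in common to achieve the velocity identity, and then use the uniform lower bound on the wrapped conditional density to match the wrapped positions with probability $\beta$.

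First, I would sample the initial data from the given coupling and use the explicit representation \eqref{eq:setup-explicit-solution} to write
\begin{equation*}
V_t^i = e^{-\lambda t} V_0^i + B_t, \qquad X_t^i \equiv X_0^i + \tfrac{1}{\lambda}(1-e^{-\lambda t})V_0^i + A_t^i \pmod{2\pi L}.
\end{equation*}
Taking a single common $B_t$ for the two processes immediately yields $V_t^1 - V_t^2 = e^{-\lambda t}(V_0^1 - V_0^2)$ and hence the desired velocity equality.

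Second, conditional on $B_t$ (and the initial data), I would couple the wrapped noises $\hat A_t^i := A_t^i \bmod 2\pi L$ on $\T$. The hypothesis lets me decompose
\begin{equation*}
Qg_{A|B}(t,a,B_t) = \beta \cdot \tfrac{1}{2\pi L} + (1-\beta)\,\tilde r(a,B_t),
\end{equation*}
with $\tilde r(\cdot,B_t)$ a probability density on $\T$. Write $\Delta := (X_0^2-X_0^1) + \tfrac{1}{\lambda}(1-e^{-\lambda t})(V_0^2-V_0^1) \in \T$. Flipping an independent $\beta$-coin, I would: on the match event sample $\hat A_t^1$ uniformly on $\T$ and set $\hat A_t^2 = \hat A_t^1 + \Delta$ (uniform by translation invariance, and forcing $X_t^1 \equiv X_t^2$ on $\T$); on the complement sample $\hat A_t^1 \sim \tilde r(\cdot,B_t)$ and set $\hat A_t^2 = \hat A_t^1$ (synchronous). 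Both constructions respect the required marginal $Qg_{A|B}(t,\cdot,B_t)$, so the resulting $(X_t^i,V_t^i)$ have the correct laws $\mu_t, \nu_t$.

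Finally, on the match event $|X_t^1-X_t^2|_{\T}=0$; on the synchronous event the torus triangle inequality together with $(a+b)^2 \le 2a^2 + 2b^2$ and $(1-e^{-\lambda t})^2 \le 1$ yield
\begin{equation*}
|X_t^1-X_t^2|_{\T}^2 \le 2|X_0^1-X_0^2|_{\T}^2 + \tfrac{2}{\lambda^2}(V_0^1-V_0^2)^2.
\end{equation*}
Weighting by the event probability $(1-\beta)$ and taking expectations gives the stated position bound. The only subtlety is bookkeeping the decomposition on $\T$ while preserving the Gaussian marginal on $\R$ of each $A_t^i$; since only the wrapped variable enters the torus distance, the argument can be carried out entirely on $\T$ using $Qg_{A|B}$ without tracking the lift.
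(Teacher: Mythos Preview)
Your proposal is correct and follows essentially the same construction as the paper: a common $B$ for velocity synchronisation, the mixture decomposition $Qg_{A|B} = \beta \cdot \tfrac{1}{2\pi L} + (1-\beta)\tilde r$, a $\beta$-coin to select the component, the translation trick on the uniform part to force $X_t^1 \equiv X_t^2$, and synchronised $\hat A$'s on the remainder followed by Young's inequality. One small sign slip: with your definition of $\Delta$ you need $\hat A_t^2 = \hat A_t^1 - \Delta$ (not $+\Delta$) to obtain $X_t^1 \equiv X_t^2$ on $\T$, since $X_t^2 - X_t^1 = \Delta + (\hat A_t^2 - \hat A_t^1)$.
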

\begin{proof}
  Let us construct such a coupling. Split the distribution $Qg_{A|B}$
  as \[Qg_{A|B}(t,a,b) = \frac{\beta}{2\pi L} + (1- \beta) s(t,a,b).\]
  Then by assumption $s$ is again a probability density for the
  variable $a$ on the torus $\mathbb{T}$. Let $B$ be an independent
  random variable with density $g_{B}(t,b)$, let $Z$ be an independent
  uniform random variable over $[0,1]$ and let $U$ be an independent
  uniform random variable over the torus. Finally let $S$ be a random
  variable with density $s(t,\cdot, B)$ only depending on $B$.

With this define the random parts $A^1,A^2$ of $X^1_t,X^2_t$ as
\begin{align*}
  A^1=& 1_{Z \leq \beta} \left[ U - X^1_0 -
        \frac{1}{\lambda}(1-e^{-\lambda t})V^1_0 \right]
        + 1_{Z > \beta} S, \\
  A^2 = & 1_{Z \leq \beta} \left[ U-X^2_0 -
          \frac{1}{\lambda}(1-e^{-\lambda t})V^2_0 \right]
          + 1_{Z > \beta} S.
\end{align*}
By construction $(A^1,B)$ and $(A^2, B)$ both have law with density $g(t,a,b)$ so that $(X^1_t,V^1_t)$ defined by
\begin{align*}
X^1_t &= X_0^1 + \frac{1}{\lambda}(1-e^{-\lambda t})V_0^1 + A^1, \\
V_t^1 &= e^{-\lambda t} V_0^1 +B,
\end{align*}
has law $\mu_t$, and $(X^2_t,V^2_t)$ defined by
\begin{align*}
X_t^2 &= X_0^2 + \frac{1}{\lambda}(1-e^{-\lambda t}) V_0^2 + A^2,\\
V_t^2 &= e^{-\lambda t}V_0^2 + B,
\end{align*} has law $\nu_t$.

Hence this is a valid coupling and we find
\[ \mathbb{E} \left[ (V_t^1-V_t^2)^2 \right] = e^{-2\lambda t} \mathbb{E} \left[ (V_0^1-V_0^2)^2 \right] \]
and
\[ \mathbb{E} \left[|X_t^1-X_t^2|_\T^2 \right] = (1- \beta) \mathbb{E} \left[ \left| X_0^1 - X_0^2 + \frac{1}{\lambda}(1-e^{-\lambda t})(V_0^1-V_0^2)\right|_\T^2 \right] \] and we can use Young's inequality to find the claimed control.
\end{proof}
We now put these two lemmas together to prove
\Cref{thm:non-markovian-coupling}, which states exponential
convergence in the MKW $\mathcal{W}_2$ distance.
\begin{proof}[Proof of \Cref{thm:non-markovian-coupling}]
  Given any initial coupling of $((X_0^1,V_0^1), (X_0^2, V_0^2))$, we
  can use \Cref{thm:coupling-construction} to obtain a coupling
  $((X_t^1,V_t^1), (X_t^2, V_t^2))$ of $\mu_t$ and $\nu_t$. From
  explicitly calculating the variance of the distribution of $A|B$
  using \eqref{eq:covariance-aa}, \eqref{eq:covariance-ab},
  \eqref{eq:covariance-bb}, we see that the variance grows
  asymptotically as $t/\lambda^2$. Hence by
  \Cref{thm:spreading-estimate} we can choose $\beta$ so that $1-\beta
  \rightarrow 0$ exponentially fast with rate $1/{2\lambda^2
    L^2}$. This, combined with the control from the second lemma,
  shows that
  \[ \mathbb{E} \left[ (V_t^1-V_t^2)^2 + |X_t^1-X_t^2|_\T^2  \right]
  \leq \left( e^{-2\lambda t} + ce^{-t/2\lambda^2 L^2} \right)
  \mathbb{E} \left[ (V_0^1-V_0^2)^2 + |X_0^1-X_0^2|_\T^2  \right]. \]
  Taking the infimum over all possible couplings at time 0 gives the desired result.
\end{proof}

The explicit solution also allows to prove that the evolution is not a
contraction semigroup.
\begin{proof}[Proof of \Cref{thm:no-contraction}]
  We will prove the theorem by contradiction. Suppose $\gamma > 0$ and
  let $a \not = b$ be two distinct points on the torus. Consider the
  initial measures
  \begin{equation*}
    \mu_0 = \delta_{x=a} \delta_{v=0}
  \end{equation*}
  and
  \begin{equation*}
    \nu_0 = \delta_{x=b} \delta_{v=0}.
  \end{equation*}
  Then the distance is $\mathcal{W}_2(\mu_0,\nu_0) = |a-b|_\T$.

  At time $t$ the spatial distribution of $\mu_t$ and $\nu_t$, interpreted in $\R$, is a
  Gaussian with variance $\Sigma_{AA}$ which by the explicit formula
  \Cref{eq:covariance-aa} can be bounded as
  \begin{equation*}
    \Sigma_{AA}(t) \le C_A t^2
  \end{equation*}
  for a constant $C_A$ and $t \le 1$.

  Hence for $d > 0$ and $t \le 1$ the spatial spreading is controlled
  as
  \begin{align*}
    \mu_t((\T\setminus[a-d,a+d])\times\R)
    &\le \frac{2 \Sigma_{AA}(t)}{d\sqrt{2\pi}}
      \exp\left( \frac{-d^2}{2\Sigma_{AA}^2(t)} \right) \\
    &\le C_1 \frac{t^2}{d} \exp\left( - C_2 \frac{d^2}{t^4} \right)
  \end{align*}
  for positive constants $C_1$ and $C_2$, where we have used the standard tail bound for the Gaussian distribution (see e.g. \cite[Lemma 12.9]{brownian-motion}).

  For any $d>0$ small enough that $a\pm d$ and $b\pm d$ do not
  wrap around the torus, any coupling between $\mu_t$ and $\nu_t$ must
  transfer at least the mass
  \begin{equation*}
    1-\mu_t((\T\setminus [a-d,a+d])\times\R)-\nu_t((\T\setminus[b-d,b+d])\times\R)
  \end{equation*}
  between $[a-d,a+d]$ and $[b-d,b+d]$.

  Hence the Wasserstein distance is bounded by
  \begin{equation*}
    \mathcal{W}_2^2(\mu_t,\nu_t)
    \ge (|a-b|_\T-2d)^2
    \left( 1 - 2C_1 \frac{t^2}{d} \exp\left(-C_2
        \frac{d^2}{t^4}\right)
    \right).
  \end{equation*}
  Taking $d=|a-b|_\T t^{3/2}$ for $t$ sufficiently small, this shows that
  \begin{equation*}
    \mathcal{W}_2^2(\mu_t,\nu_t)
    \ge |a-b|^2_\T
    (1-2t^{3/2})^2
    \left( 1 - \frac{2C_1}{|a-b|_\T} \sqrt{t}
      \exp\left(- \frac{C_2 |a-b|_\T^2}{t} \right)
    \right).
  \end{equation*}
  However, for all small enough positive $t$, we have
  \begin{equation*}
    (1-2t^{3/2})^2 > e^{-\gamma t/2}
  \end{equation*}
  and
  \begin{equation*}
    \left( 1 - \frac{2C_1}{|a-b|_\T} \sqrt{t}
      \exp\left(- \frac{C_2 |a-b|^2_\T}{t} \right)
    \right)
     > e^{-\gamma t/2}
  \end{equation*}
  contradicting the assumed contraction. For the second estimate we use
  $\exp(-c/t) \le (1+c/t)^{-1} =
  t/(c+t)$.
\end{proof}

\section{Co-adapted couplings}\label{sec:markovian}
\subsection{Existence}\label{subsec:markovian:existence}
For \Cref{thm:markovian-coupling-existence} we construct a
reflection/synchronisation coupling using the drift-corrected
positions $Y^i_t$. As the positions are on the torus we can use a
reflection coupling until $Y^1_t$ and $Y^2_t$ agree. Afterwards, we
use a synchronisation coupling which keeps $Y^1_t = Y^2_t$ and reduces
the velocity distance.

For a formal definition let $((X^1_0,V^1_0),(X^2_0,V^2_0))$ be a
coupling between $\mu$ and $\nu$ obtaining the MKW distance (the
existence of such a coupling is a standard result, see
e.g. \cite[Theorem 4.1.]{villani2009optimal}). For a Brownian motion
$W^1_t$ let $(X^1_t,V^1_t)$ be the strong solution to
\eqref{eq:kinetic-fokker-planck-SDE} and define $(X^2_t,V^2_t)$ as the
strong solution with the reflected driving Brownian motion
\begin{equation*}
  W^2_t=\begin{cases}
    -W_t^1& t\le T\\
    W_t^1-2W^1_T&t>T.
  \end{cases}
\end{equation*}
with the stopping time $T=\inf\{t\ge 0: Y^1_t=Y^2_t\}$ with $Y^i_t$
from \eqref{eq:drift-correct-coordinates}. For the analysis we
introduce the notation
\begin{align*}
  M_t&=Y^1_t-Y^2_t\\
  Z_t&=V^1_t-V^2_t.
\end{align*}
Then by the construction the evolution is given by
\begin{align}
  \label{eq:coupling-sde-distance}
  \dd M_t &= \frac{2}{\lambda} 1_{t\le T} \dd W^1_t, \\
  \label{eq:coupling-sde-vel}
  \dd Z_t &= -\lambda Z_t\dd t+2\cdot 1_{t\le T}\dd W^1_t,
\end{align}
where $M_t$ evolves on the torus $\T$.

As a first step we introduce a bound for $T$.
\begin{lemma}
  The stopping time $T$ satisfies
  \begin{equation}\label{eq:T-distribution}
    \mathbb{P}(T>t|M_0)=\frac4\pi\sum_{k=0}^\infty\frac1{2k+1}\exp\left(-\frac{(2k+1)^2}{2\lambda^2L^2}t\right)\sin\left(\frac{(2k+1)|M_0|_{\T}}{2L}\right ).
  \end{equation}
\end{lemma}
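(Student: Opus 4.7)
The plan is to reduce the hitting-time problem to a standard absorbed Brownian motion on an interval and then apply separation of variables. By \eqref{eq:coupling-sde-distance}, on $\{t\le T\}$ the process $M_t$ is a Brownian motion on the torus $\T$ with infinitesimal variance $4/\lambda^2$; after $T$ it stops. Picking the canonical representative $M_0 \in [0,2\pi L)$ and lifting to $\R$ via $\tilde M_t = M_0 + (2/\lambda)W^1_t$, the event $\{M_t = 0 \text{ in } \T\}$ is $\{\tilde M_t \in 2\pi L\Z\}$, and by path continuity the first such time is the first exit time of $\tilde M_t$ from the open interval $(0,2\pi L)$. Hence $\P(T>t\mid M_0)$ equals the survival probability, starting from $M_0$, of a Brownian motion on $[0,2\pi L]$ with absorbing endpoints.

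Next I would compute the transition density $p(t,M_0,y)$ of this absorbed process by solving $\partial_t p = (2/\lambda^2)\partial_y^2 p$ on $[0,2\pi L]$ with Dirichlet boundary conditions. The $L^2$-normalised eigenfunctions are $\phi_n(y) = (\pi L)^{-1/2}\sin(ny/(2L))$ for $n\ge 1$, with eigenvalues $n^2/(2\lambda^2 L^2)$, so
\begin{equation*}
  p(t,M_0,y) = \frac{1}{\pi L}\sum_{n\ge 1} \exp\!\left(-\frac{n^2 t}{2\lambda^2 L^2}\right) \sin\!\left(\frac{n M_0}{2L}\right)\sin\!\left(\frac{n y}{2L}\right).
\end{equation*}
The survival probability is obtained by integrating over $y\in[0,2\pi L]$. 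Since $\int_0^{2\pi L}\sin(ny/(2L))\,\dd y = (2L/n)(1-\cos(n\pi))$ vanishes for even $n$ and equals $4L/n$ for odd $n=2k+1$, only the odd modes contribute, and one obtains exactly the series in \eqref{eq:T-distribution} but with $M_0$ in place of $|M_0|_\T$.

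Finally, the identity $\sin((2k+1)\pi - x) = \sin(x)$ gives $\sin((2k+1)(2\pi L - M_0)/(2L)) = \sin((2k+1)M_0/(2L))$ for every $k$, so the series is invariant under the reflection $M_0 \mapsto 2\pi L - M_0$. Consequently it depends only on the torus distance $|M_0|_\T \in [0,\pi L]$, yielding \eqref{eq:T-distribution} as stated. There is no real obstacle here beyond carefully tracking the factor $4/\lambda^2$ in the diffusion coefficient and the resulting $2\lambda^2 L^2$ in the exponents; the symmetrisation in $|M_0|_\T$ is the only place where one must think rather than just compute.
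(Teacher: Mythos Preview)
Your proposal is correct and follows the same approach as the paper: both identify $T$ as the first exit time of a (rescaled) Brownian motion from the interval $(0,2\pi L)$, and the paper's remark immediately after the lemma explicitly says the formula comes from solving the heat equation on $[0,2\pi L]$ with Dirichlet boundary conditions---which is precisely what you carry out via the eigenfunction expansion. The only difference is that the paper outsources the computation to a reference \cite[(7.14--7.15)]{brownian-motion}, whereas you do it by hand; your tracking of the diffusion coefficient $4/\lambda^2$ and the reflection argument showing the dependence on $|M_0|_\T$ are both fine.
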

\begin{proof}
  As $M_t$ evolves on the torus, $T$ is the first exit time of a
  Brownian motion starting at $M_0$ from the interval
  $(0,2\pi L)$. See \cite[(7.14-7.15)]{brownian-motion}, from which
  the claim follow after rescaling to incorporate the $2/\lambda$
  factor.
\end{proof}
\begin{remark}
  The second expression in \eqref{eq:T-distribution} is obtained by
  solving the heat equation on $[0,2\pi L]$ with Dirichlet boundary
  conditions and initial condition $\delta_{M_0}$.
\end{remark}
\begin{lemma}
  \label{thm:T-upper-bound}
  There exists a constant $C$ such that for any $t>0$ the following holds
  \begin{equation}\label{eq:T-upper-bound}
    \mathbb{P}(T>t|M_0)\le C|M_0|_\T(1+ t^{-1/2})e^{-t/(2\lambda^2L^2)}.
  \end{equation}
\end{lemma}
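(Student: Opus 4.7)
The plan is to start directly from the explicit Fourier/series expression for $\mathbb{P}(T>t|M_0)$ given in the preceding lemma and extract the two pieces of the claimed bound, namely the linear dependence in $|M_0|_\T$ and the decay factor $(1+t^{-1/2})e^{-t/(2\lambda^2L^2)}$, by term-by-term estimates.

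The first step is to estimate each summand. Since $|\sin y|\le |y|$ for all $y\in\R$, the factor $\frac{1}{2k+1}\sin\!\left(\tfrac{(2k+1)|M_0|_\T}{2L}\right)$ is bounded in absolute value by $\frac{|M_0|_\T}{2L}$, uniformly in $k$. Pulling this out of the sum gives
\begin{equation*}
  \mathbb{P}(T>t|M_0)\ \le\ \frac{4}{\pi}\cdot\frac{|M_0|_\T}{2L}\sum_{k=0}^\infty \exp\!\left(-\frac{(2k+1)^2}{2\lambda^2L^2}t\right),
\end{equation*}
which already isolates the desired linear factor $|M_0|_\T$.

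The second step is to show the remaining theta-type series is bounded by a constant multiple of $(1+t^{-1/2})e^{-t/(2\lambda^2L^2)}$. Writing $c=1/(2\lambda^2L^2)$ and factoring out the $k=0$ exponential, we have
\begin{equation*}
  \sum_{k=0}^\infty e^{-(2k+1)^2 ct}\ =\ e^{-ct}\left(1+\sum_{k=1}^\infty e^{-((2k+1)^2-1)ct}\right).
\end{equation*}
The identity $(2k+1)^2-1=4k(k+1)\ge 4k^2$ for $k\ge 1$ gives
\begin{equation*}
  \sum_{k=1}^\infty e^{-((2k+1)^2-1)ct}\ \le\ \sum_{k=1}^\infty e^{-4k^2 ct}\ \le\ \int_0^\infty e^{-4k^2 ct}\,\dd k\ =\ \frac{\sqrt{\pi}}{4\sqrt{ct}},
\end{equation*}
so the bracket is controlled by $1+C't^{-1/2}$ for a constant $C'$ depending only on $\lambda$ and $L$.

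Combining these two estimates yields the claimed bound with a constant $C$ depending only on $\lambda$ and $L$. I do not expect any serious obstacle; the only subtle choice is factoring out $e^{-ct}$ before estimating the tail of the series, because a naive integral comparison of $\sum_{k\ge 0}e^{-(2k+1)^2ct}$ would produce a $t^{-1/2}$ with no exponential factor, which is too weak for large $t$. Isolating the $k=0$ mode and comparing only the higher modes to a Gaussian integral gives the sharp exponential decay together with the $t^{-1/2}$ short-time correction.
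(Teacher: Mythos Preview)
Your proof is correct and follows essentially the same approach as the paper: both arguments use $\sin x\le x$ to extract the factor $|M_0|_\T$, factor out the $k=0$ exponential $e^{-ct}$ with $c=1/(2\lambda^2L^2)$, bound $(2k+1)^2-1\ge 4k^2$, and then compare the remaining Gaussian-type sum to the integral $\int_0^\infty e^{-4u^2 ct}\,\dd u$ to obtain the $t^{-1/2}$ correction. The computations and constants match.
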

\begin{proof}
Using \eqref{eq:T-distribution} and the inequality $\sin(x)\le x$ for $x\ge0$, we have
\begin{equation*}
\begin{aligned}
\mathbb{P}(T>t|M_0)&\le \frac4\pi e^{-t/(2\lambda^2 L^2)}\sum_{k=0}^\infty \frac{|M_0|_{\T}}{2L}\frac{2k+1}{2k+1}e^{-4k^2t/(2\lambda^2L^2)}\\
&\le \frac{2}{\pi L}|M_0|_\T e^{-t/(2\lambda^2
  L^2)}\left(1+\int^\infty_0e^{-4u^2t/(2\lambda^2L^2)}\dd u\right)\\
&=\frac{2}{\pi L}|M_0|_\T e^{-t/(2\lambda^2 L^2)}\left(1+\sqrt{\frac{\pi}{8t/(\lambda^2L^2)}}\right)\\
&\le C|M_0|_\T(1+ t^{-1/2})e^{-t/(2\lambda^2 L^2)}
\end{aligned}
\end{equation*}
where on the second line we have bounded the sum by an integral.
\end{proof}
Using these simple estimates, we now study the convergence rate of the
coupling.
\begin{lemma}\label{thm:coupling-time-bound}
  There exists a constants $C$ such that for any $t\ge0$ we have the
  bound
\begin{equation*}
\mathbb{E}\left[|M_t|^2_\T+|Z_t|^2\middle|(Z_0,M_0)\right]\le |Z_0|^2e^{-2\lambda t}+\begin{cases}
C|M_0|_\T e^{-2\lambda t}& 2\lambda<1/(2\lambda^2L^2)\\
C|M_0|_\T(1+t)e^{-2\lambda t}& 2\lambda=1/(2\lambda^2L^2)\\
C|M_0|_\T e^{-t/(2\lambda^2L^2)}&2\lambda>1/(2\lambda^2L^2).
\end{cases}
\end{equation*}
\end{lemma}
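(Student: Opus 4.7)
The plan is to estimate the position difference $M_t$ and the velocity difference $Z_t$ separately and then sum the two bounds. A key simplification is that for $t \ge T$ the coupling forces $M_t = 0$ and reduces $Z_t$ to the deterministic decay $Z_T e^{-\lambda(t-T)}$, so the nontrivial behaviour occurs only on $\{t \le T\}$ and everything can be recast in terms of the law of $T$ given by \Cref{thm:T-upper-bound}.

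For the position, I would bound $\E[|M_t|^2_\T \mid M_0]$ in two complementary ways, taking whichever is better depending on the size of $t$. For small $t$, lift $M_t$ to $\R$ and note that (with $M_0 \in [0, \pi L]$ chosen without loss of generality so that $M_0 = |M_0|_\T$) it is then a Brownian motion stopped at the first hitting time $T$ of $\{0, 2\pi L\}$. Optional stopping on the bounded martingale $M_{t \wedge T}$, together with the trivial inequality $|M_t|_\T \le M_t$ on $\{t \le T\}$, gives $\E[|M_t|_\T\, 1_{t \le T} \mid M_0] \le |M_0|_\T$; combining with $|M_t|^2_\T \le \pi L\,|M_t|_\T$ yields $\E[|M_t|^2_\T \mid M_0] \le \pi L |M_0|_\T$. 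For large $t$, we instead use $|M_t|^2_\T \le (\pi L)^2 1_{t \le T}$ together with \Cref{thm:T-upper-bound}, producing the exponential decay $C|M_0|_\T (1+t^{-1/2}) e^{-t/(2\lambda^2 L^2)}$. Splitting at $t = 1$ and absorbing a bounded constant $e^{1/(2\lambda^2 L^2)}$ into $C$ merges these into
\begin{equation*}
  \E[|M_t|^2_\T \mid M_0] \le C|M_0|_\T\, e^{-t/(2\lambda^2 L^2)}
\end{equation*}
for all $t \ge 0$.

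For the velocity, I would use the explicit solution $Z_t = e^{-\lambda t} Z_0 + 2 \int_0^{t \wedge T} e^{-\lambda(t-s)}\,\dd W_s^1$ of \eqref{eq:coupling-sde-vel}. Since $T$ is a stopping time for the filtration generated by $W^1$, the cross term in $\E[Z_t^2 \mid Z_0, M_0]$ vanishes, and It\=o's isometry together with Fubini gives
\begin{equation*}
  \E[Z_t^2 \mid Z_0, M_0] = Z_0^2 e^{-2\lambda t} + 4 \int_0^t e^{-2\lambda(t-s)}\, \mathbb{P}(T > s \mid M_0)\, \dd s.
\end{equation*}
Plugging in \Cref{thm:T-upper-bound} and extracting the $e^{-2\lambda t}$ factor reduces the problem to bounding $\int_0^t e^{\alpha s} (1 + s^{-1/2})\, \dd s$ with $\alpha = 2\lambda - 1/(2\lambda^2 L^2)$. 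The three cases of the lemma correspond exactly to $\alpha < 0$, $\alpha = 0$, $\alpha > 0$: the integral is uniformly bounded in Case 1, grows like $1 + t$ in Case 2, and is dominated by $e^{\alpha t}/\alpha$ (for $t \ge 1$) in Case 3, where multiplication by $e^{-2\lambda t}$ converts $e^{\alpha t}$ back into $e^{-t/(2\lambda^2 L^2)}$.

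The main technical nuisance is the $s^{-1/2}$ singularity of the $\mathbb{P}(T > s)$ bound: it is integrable at $0$, but in Case 3 it must be segregated from the positive exponent by splitting the integral at $s = 1$, otherwise a spurious $\sqrt t$ factor appears and spoils the exponential rate. Adding the resulting bounds on $\E[|M_t|^2_\T]$ and $\E[Z_t^2]$, and noticing that the slowest exponential rate is $2\lambda$ in Case 1 and $1/(2\lambda^2 L^2)$ in Case 3 (with the two coinciding in Case 2), then delivers the claimed three-case estimate.
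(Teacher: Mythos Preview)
Your proposal is correct and follows essentially the same route as the paper: bound $\E|M_t|^2_\T$ by the martingale argument for small $t$ and by $(\pi L)^2\,\mathbb{P}(T>t)$ for large $t$, and reduce $\E Z_t^2$ to the integral $\int_0^t e^{(2\lambda-1/(2\lambda^2 L^2))s}(1+s^{-1/2})\,\dd s$ with the same three-case analysis. The only cosmetic difference is that you compute $\E Z_t^2$ via the explicit Duhamel solution and It\=o isometry, whereas the paper applies It\=o's lemma to $|Z_t|^2$ and solves the resulting linear ODE; both produce the same integral (up to an inessential constant).
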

\begin{proof}
  Without loss of generality we may assume that $Z_0$ and $M_0$ are
  deterministic in order to avoid writing the conditional expectation.

  Applying Itō's lemma, we find from \eqref{eq:coupling-sde-vel} that
  \begin{equation*}
    \dd|Z_t|^2=-2\lambda |Z_t|^2\dd t+4\cdot 1_{t\le T}Z_t\dd W^1_t+2\cdot 1_{t\le T}\dd t.
  \end{equation*}
  After taking expectations we see that
  \begin{equation}\label{eq:sde-for-|Z_t|^2}
    \frac{\dd}{\dd t}\mathbb{E}|Z_t|^2=-2\lambda \mathbb{E}|Z_t|^2+2\mathbb{P}(t\le T).
  \end{equation}
  By explicitly solving \eqref{eq:sde-for-|Z_t|^2} and using
  \Cref{thm:T-upper-bound}, we obtain
  \begin{equation*}
    \begin{aligned}
      \mathbb{E}|Z_t|^2&=|Z_0|^2e^{-2\lambda t}+2e^{-2\lambda t}\int^t_0e^{2\lambda s}\mathbb{P}(s\le T)\,\dd s\\
      &\le |Z_0|^2e^{-2\lambda t}+ C|M_0|_\T e^{-2\lambda t}\underbrace{\int^t_0e^{(2\lambda-1/(2\lambda^2L^2))s}(1+s^{-1/2})\,\dd s}_{=:I_t}.
    \end{aligned}
  \end{equation*}
  Let us bound $I_t$. As the integrand is locally integrable, we have
  for a constant $C$
  \begin{equation*}
    \begin{aligned}
      I_t&\le  C\left(1+\int^t_0e^{(2\lambda-1/(2\lambda^2L^2))s}\,\dd s\right).
    \end{aligned}
  \end{equation*}
  Here the $s^{-1/2}$ term can be bounded by 1 for $s>1$ and for
  $s \le 1$ the additional contribution can be absorbed into the
  constant.  To bound the remaining integral we consider three cases:
\begin{itemize}
\item $2\lambda<1/(2\lambda^2L^2)$: The integral (and $I_t$) are uniformly bounded, $I_t\le C$.
\item $2\lambda=1/(2\lambda^2L^2)$: The integrand is equal to $1$ and $I_t\le C(1+t)$.
\item $2\lambda>1/(2\lambda^2L^2)$: The integrand grows and $I_t\le C(1+e^{(2\lambda-1/(2\lambda^2L^2))t})$.
\end{itemize}
In each case we multiply $I_t$ by $e^{-2\lambda t}$ to obtain the decay rate. In the first two cases this gives the dominant term with $|M_0|_\T$ (as opposed to $|Z_0|$) dependence, while in the last case it is lower order than the $e^{-t/(2\lambda^2 L^2)}$ decay we obtain from $\mathbb{E}|M_t|^2_\T$ below.

Next let us consider $\mathbb{E}|M_t|^2_\T$. Using the finite diameter
of the torus we have the simple estimate
\begin{equation*}
\mathbb{E}|M_t|^2_\T\le \pi^2L^2\mathbb{P}(T>t).
\end{equation*}
For $t\ge1$ (say), we can use \Cref{thm:coupling-time-bound}, to obtain
\begin{equation*}
  \mathbb{E}|M_t|^2_\T\le C|M_0|_\T e^{-t/(2\lambda^2L^2)}
  \quad \text{for $t\ge 1$}.
\end{equation*}
This leaves when $t\le 1$ where \eqref{eq:T-upper-bound} blows up. We instead use the martingale property of $M_t$. Without loss of generality we may assume that $M_0\in[0,\pi L]$. Then as $M_t$ is stopped at $T$ we know that $M_t\in [0,2\pi L]$ for all $t\ge0$. Hence, for any $t\ge0$,
\begin{equation*}
\mathbb{E}|M_t|^2_\T\le \mathbb{E}|M_t|^2\le 2\pi L\mathbb{E}M_t=2\pi L M_0=2\pi L|M_0|_\T
\end{equation*}
by the martingale property. Combining the $t\le1$ and $t\ge1$ estimates we have
\begin{equation*}
  \mathbb{E}|M_t|_\T^2\le C|M_0|_\T e^{-t/(2\lambda^2L^2)}
  \quad
  \text{for $t\ge 0$}.
\end{equation*}
This together with the bound for $\mathbb{E}|Z_t|^2$ provides the claimed bounds of the lemma and completes its proof.
\end{proof}

By the equivalence of the norms from $(X,V)$ and $(Y,V)$, this is the
required coupling for \Cref{thm:markovian-coupling-existence}.

\subsection{Optimality}\label{subsec:markovian:optimality}

In order to show \Cref{thm:markovian-coupling-optimality}, we focus on
the drift-corrected positions $Y^1_t$ and $Y^2_t$ which behave like
time-rescaled Brownian motion on the torus. For their quadratic
distance we prove the following decay bound.

\begin{proposition}\label{prop:lower-bound-for-markovian-couplings}
  Suppose there exist functions $\alpha: (0,\pi L] \mapsto \R^+$ and
  $\beta : [0,\infty) \mapsto \R^+$ with $\beta \in L^1([0,\infty))$,
  such that, for any $z \in (0,\pi L]$ there exist two standard
  Brownian motions $W^1_t$ and $W^2_t$ on the torus $\T=\mathbb{R}/(2\pi L\mathbb{Z})$
  with respect to a common filtration such that $|W^1_0 - W^2_0| = z$,
  and for $t \in \R^+$ it holds that
  \begin{equation*}
    \E[|W^1_t-W^2_t|^2_\T]
    \le (\alpha(z))^2 \beta(t).
  \end{equation*}
  Then with a constant $c$ only depending on $L$, the function
  $\alpha$ satisfies the bound
  \begin{equation*}
    \alpha(z)\ge c\norm{\beta}_{L^1([0,\infty))}^{-1/2}\sqrt{z}.
  \end{equation*}
\end{proposition}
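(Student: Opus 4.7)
The plan is to follow the hint and reduce the problem to one about a single standard Brownian motion on an interval. Lift the difference $D_t := W^1_t - W^2_t$ continuously to $\R$ starting from $D_0 = z \in (0, 2\pi L)$. Since each $W^i$ is a standard Brownian motion adapted to the common filtration, $D_t$ is a continuous martingale in that filtration, and the polarization identity combined with Kunita--Watanabe gives $|\langle W^1, W^2\rangle_t| \le t$, whence $\dd\langle D\rangle_t \le 4\,\dd t$ almost surely. Introduce the first collision time on the torus $\tau := \inf\{t \ge 0 : D_t \in 2\pi L\Z\}$; for $t < \tau$ the lift stays in $(0, 2\pi L)$ and $|W^1_t - W^2_t|_\T^2 = \phi(D_t)$, where $\phi(x) := \min(x, 2\pi L - x)^2$.

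Integrating the hypothesised bound in time would yield
\begin{equation*}
\alpha(z)^2 \norm{\beta}_{L^1([0,\infty))} \;\ge\; \int_0^\infty \E[|W^1_t - W^2_t|_\T^2]\,\dd t \;\ge\; \E\int_0^\tau \phi(D_t)\,\dd t \;\ge\; \tfrac14 \E\int_0^\tau \phi(D_t)\,\dd\langle D\rangle_t,
\end{equation*}
where the last step uses $\dd t \ge \tfrac14\dd\langle D\rangle_t$. By Dambis--Dubins--Schwarz (after possibly enlarging the probability space), $D_t = B_{\langle D\rangle_t}$ for a standard Brownian motion $B$ starting at $z$, and the time change converts the right-hand side into $\tfrac14 \E\int_0^{\tau_B} \phi(B_s)\,\dd s$, where $\tau_B$ is the first exit time of $B$ from $(0, 2\pi L)$. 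By the standard Green's function formula for Brownian motion killed at exit from $(0, 2\pi L)$,
\begin{equation*}
\E\int_0^{\tau_B}\phi(B_s)\,\dd s = \int_0^{2\pi L} \frac{\min(z,y)\,(2\pi L - \max(z,y))}{\pi L}\,\phi(y)\,\dd y,
\end{equation*}
and restricting the integrand to $y \in [\pi L, 3\pi L/2]$ (on which $\phi(y) \ge \pi^2 L^2/4$ and, for $z \in (0, \pi L]$, the Green's function is $\ge z/2$) produces a lower bound of the form $c_L z$ with $c_L > 0$ depending only on $L$. Chaining the inequalities gives $\alpha(z)^2 \ge c_L z/(4\,\norm{\beta}_{L^1([0,\infty))})$, which is the claim.

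The main obstacle is the time-change step that bridges the coupling-dependent process $D_t$, about which we know only the quadratic-variation bound $\dd\langle D\rangle_t \le 4\,\dd t$, and a genuine Brownian motion on the interval $(0, 2\pi L)$ where the Green's function is explicit. Dambis--Dubins--Schwarz handles this cleanly, modulo the minor technicality of enlarging the probability space when $\langle D\rangle_\infty$ happens to be finite. Once the reduction is made, the Green's function estimate is a routine computation, so the analytic core of the argument is the reduction to one-dimensional Brownian motion via the time change.
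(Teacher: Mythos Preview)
Your argument is correct and reaches the same conclusion, but the route is genuinely different from the paper's. The paper does not time-change. Instead it introduces a stopping time $T=\inf\{t:W^1_t-W^2_t\notin(az,\pi L)\}$, uses optional stopping (gambler's ruin) to see that $\mathbb{P}[W^1_T-W^2_T=\pi L]$ is proportional to $z$, and then invokes a separate lemma: for \emph{any} co-adapted coupling of two Brownian motions on $\T$ one has $\E[|W^1_t-W^2_t|_\T^2]\ge c\,e^{-2t/L^2}\E[|W^1_0-W^2_0|_\T^2]$, proved by checking that $H_t=L\sin\bigl((W^1_t-W^2_t)/2L\bigr)\exp([W^1-W^2]_t/4L^2)$ is a true martingale. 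Applying this after time $T$ via the strong Markov property on the event of hitting $\pi L$ gives $\E\int_T^\infty|W^1_t-W^2_t|_\T^2\,\dd t\ge c_L z$. Your approach replaces both the stopping-time splitting and this auxiliary martingale lemma by a single Dambis--Dubins--Schwarz time change plus the explicit Green's function on the interval; this is more direct and arguably cleaner, at the cost of invoking DDS rather than purely elementary It\^o tools. One small point you should make explicit: the identification $\langle D\rangle_\tau=\tau_B$ (or at least $\langle D\rangle_\tau\ge\tau_B$) a.s.\ is not automatic on the event $\{\tau=\infty,\ \langle D\rangle_\infty<\infty\}$; it uses the hypothesis $\beta\in L^1$, which forces $\E[|W^1_{t_n}-W^2_{t_n}|_\T^2]\to 0$ along a subsequence and hence $D_\infty\in\{0,2\pi L\}$ a.s.\ on that event via Fatou.
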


From this \Cref{thm:markovian-coupling-optimality} follows easily.
\begin{proof}[Proof of \Cref{thm:markovian-coupling-optimality}]
  Fix $z \in (0,\pi L]$ and consider the initial distributions
  $\mu = \delta_{X=0} \delta_{V=0}$ and
  $\nu = \delta_{X=z} \delta_{V=0}$. Between $\mu$ and $\nu$, there is
  only one coupling and $\mathcal{W}_2(\mu,\nu) = z$.

  If there exists a co-adapted coupling
  $((X^1_t,V^1_t),(X^2_t,V^2_t))$ satisfying the bound, then
  $Y^1_{t/\lambda^2}$ and $Y^2_{t/\lambda^2}$ are Brownian motions on
  the torus with a common filtration. Moreover,
  \begin{equation*}
    \E[|Y^1_t-Y^2_t|^2_\T]
    \le C\, \E[|X^1_t-X^2_t|^2_\T+|V^1_t-V^2_t|^2]
  \end{equation*}
  for a constant $C$ only depending on $\lambda$. Hence we can apply
  \Cref{prop:lower-bound-for-markovian-couplings} to find the claimed
  lower bound for $\alpha$.
\end{proof}

For the proof of \Cref{prop:lower-bound-for-markovian-couplings}, we
first prove the following lemma.
\begin{lemma}\label{lem:trivial-lower-bound-for-couplings}
  Given two Brownian motions $W^1_t$ and $W^1_t$ on the torus with a
  common filtration, then there exists a numerical constant $c$ such
  that
  \begin{equation*}
    \E[|W^1_t-W^2_t|_\T^2]
    \ge c\, e^{-2t/L^2} \E[|W^1_0-W^2_0|_\T^2].
  \end{equation*}
\end{lemma}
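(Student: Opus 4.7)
The plan is to use $\cos(\cdot/L)$ as a smooth, $2\pi L$-periodic test function that captures the squared torus distance up to absolute constants via the elementary inequalities $2y^2/\pi^2\le 1-\cos(y)\le y^2/2$ valid for $y\in[-\pi,\pi]$. First I would lift the two torus-valued Brownian motions to continuous martingales on $\R$ with unit quadratic variation. Since they are adapted to a common filtration, Kunita--Watanabe provides a predictable process $\rho_t\in[-1,1]$ with $d\langle W^1,W^2\rangle_t=\rho_t\,dt$; hence $D_t:=W^1_t-W^2_t$ satisfies $d\langle D\rangle_t=2(1-\rho_t)\,dt$.

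Next I would apply Itō's formula to $\cos(D_t/L)$. The $\sin$-part has a bounded integrand, giving a true martingale, and taking expectations yields
\[
\frac{d}{dt}\E[\cos(D_t/L)]=-\frac{1}{L^2}\E\bigl[\cos(D_t/L)(1-\rho_t)\bigr].
\]
Setting $g_t:=\E[1-\cos(D_t/L)]\ge 0$, the key algebraic step is to write $\cos=1-(1-\cos)$, so that
\[
\frac{dg_t}{dt}=\frac{1}{L^2}\E[1-\rho_t]-\frac{1}{L^2}\E\bigl[(1-\cos(D_t/L))(1-\rho_t)\bigr].
\]
The first term is nonnegative (since $\rho_t\le 1$), and $1-\rho_t\le 2$ bounds the second by $(2/L^2)\,g_t$. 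This produces the one-sided differential inequality $g_t'\ge -(2/L^2)g_t$, whence $g_t\ge e^{-2t/L^2}g_0$ by Gronwall.

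Finally I would translate back to the torus distance. The lower bound $1-\cos(y)\ge 2y^2/\pi^2$ applied at $t=0$ with $y=D_0/L\in[-\pi,\pi]$ gives $g_0\ge (2/(\pi^2L^2))\,\E[|W^1_0-W^2_0|_\T^2]$, while the upper bound $1-\cos(y)\le y^2/2$ at time $t$ gives $\E[|W^1_t-W^2_t|_\T^2]\ge 2L^2 g_t$. Combining these yields the lemma with explicit constant $c=4/\pi^2$.

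The main obstacle is the choice of test function: a direct computation with $|D_t|_\T^2$ is awkward because that function is not $C^2$ across the cut $|D|_\T=\pi L$, whereas $\cos(D/L)$ is smooth and periodic. The decomposition $\cos=1-(1-\cos)$, combined with the nonnegativity of $1-\rho_t$, is what converts the dissipation term into a genuine Gronwall-type lower bound on $g_t$ rather than an uncontrolled absolute-value estimate.
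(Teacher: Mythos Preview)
Your argument is correct and reaches the same exponential rate $e^{-2t/L^2}$ as the paper, but the execution differs. Both proofs replace the non-smooth torus distance by the same periodic surrogate, since $1-\cos(D_t/L)=2\sin^2(D_t/(2L))$ is precisely (up to a factor $2/L^2$) the paper's metric $d_\T^2$. From there, however, the paper multiplies $L\sin(D_t/(2L))$ by the compensator $\exp([W^1-W^2]_t/(4L^2))$ to produce a genuine martingale $H_t$, invokes the submartingale property of $H_t^2$, and then strips off the compensator using the deterministic bound $[W^1-W^2]_t\le 4t$. You instead parametrise the cross-variation by a predictable correlation $\rho_t\in[-1,1]$ via Kunita--Watanabe, compute $\frac{d}{dt}\E[1-\cos(D_t/L)]$ directly, and obtain the one-sided differential inequality $g_t'\ge -(2/L^2)g_t$ from the decomposition $\cos=1-(1-\cos)$ and the trivial bound $1-\rho_t\le 2$. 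Your route is slightly more elementary in that it avoids verifying square-integrability of the compensated martingale, and it yields the explicit constant $c=4/\pi^2$; the paper's route packages the same information into a single martingale identity. Both are valid and essentially dual to one another: your Gronwall step is the differential form of the paper's submartingale inequality.
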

\begin{proof}
  The natural (squared) metric $|x-y|^2_{\mathbb{T}}$ on the torus is
  not a global smooth function of $x,y\in\mathbb{R}$ as it takes $x,y$
  $\operatorname{mod}2\pi L$. Therefore we introduce the equivalent
  metric
  \begin{equation*}
    d^2_\T(x,y)=L^2\sin^2\left(\frac{x-y}{2L}\right),
  \end{equation*}
  which is a smooth function of $x,y\in\mathbb{R}$. Moreover, the
  constants of equivalence are independent of $L$, i.e. there exist
  numerical constants $c_1$ and $c_2$ such that
  \begin{equation*}
    c_1 |x-y|^2_\T \le d^2_\T(x,y)
    \le c_2 |x-y|^2_\T.
  \end{equation*}

  Now consider $H_t$ defined by
  \begin{equation*}
    H_t = L \sin\left(\frac{W^1_t-W^2_t}{2L}\right)
    \exp\left(\frac{[W^1-W^2]_t}{4L^2}\right).
  \end{equation*}
  As $W^1_t$ and $W^2_t$ are Brownian motions, their quadratic
  variation is controlled as $[W^1-W^2]_t \le 4t$.  By Itō's lemma
  \begin{equation*}
    \dd H_t = \frac{1}{2}
    \cos\left(\frac{W^1_t-W^2_t}{2L}\right)
    \exp\left(\frac{[W^1-W^2]_t}{4L^2}\right)
    \dd (W^1-W^2)_t.
  \end{equation*}
  Hence Itō's isometry shows that
  \begin{align*}
    \E |H_t|^2
    &= \E |H_0|^2 + \E \int_0^t \frac{1}{4}
    \cos^2\left(\frac{W^1_t-W^2_t}{2L}\right)
    \exp\left(\frac{[W^1-W^2]_t}{2L^2}\right)
      \dd [W^1-W^2]_t \\
    &\le \E |H_0|^2 + \E \int_0^t
    \exp\left(\frac{2t}{L^2}\right)
      \dd t \\
    &< \infty.
  \end{align*}
  Therefore, $H_t$ is a true martingale and by Jensen's inequality
  \begin{equation*}
    \E[ |H_t|^2 ] \ge \E[ |H_0|^2 ].
  \end{equation*}
  Using the equivalence of two metrics, we thus find the required
  bound
  \begin{align*}
    \E[|W^1_t-W^2_t|_\T^2]
    &\ge c_2^{-1}
      \E\left[|H_t|^2
      \exp\left(-\frac{[W^1-W^2]_t}{2L^2}\right)
      \right] \\
    &\ge c_2^{-1} \E\left[|H_0|^2\right]
      \exp\left(-\frac{2t}{L^2}\right) \\
    &\ge c_1 c_2^{-1} \E[|W^1_0-W^2_0|_\T^2]
      \exp\left(-\frac{2t}{L^2}\right). \qedhere
  \end{align*}
\end{proof}

With this we approach the final proof.
\begin{proof}[Proof of \Cref{prop:lower-bound-for-markovian-couplings}]
  Fix $a \in (0,1)$, let $z \in (0,\pi L]$ be given, and by
  symmetry assume without loss of generality that
  $W^1_0 - W^2_0 = |W^1_0 - W^2_0| = z$. Then define the stopping time
  \begin{equation*}
    T = \inf\{t \ge 0: W^1_t - W^2_t \not \in (az,\pi L)\}.
  \end{equation*}
  The distance can be directly bounded as
  \begin{equation*}
    \E[|W^1_t-W^2_t|^2_\T] \ge \mathbb{P}[T \ge t] (az)^2.
  \end{equation*}
  As $\beta$ is integrable, it must decay along a subsequence of times
  and thus $T$ must be almost surely finite.

  As $W^1_t$ and $W^2_t$, considered on $\R$, are continuous
  martingales, their difference is also a continuous martingale. By
  the construction of the stopping time, the stopped martingale
  $(W^1-W^2)_{t \wedge T}$ is bounded by $\pi L$ and the optional
  stopping theorem implies
  \begin{equation*}
    \mathbb{P}[W^1_T-W^2_T=\pi L]
    = \frac{z-az}{\pi L - az}.
  \end{equation*}

  Since Brownian motions satisfy the strong Markov property, we find
  together with \Cref{lem:trivial-lower-bound-for-couplings}
  \begin{align*}
    \E \int_0^\infty |W^1_t-W^2_t|^2_\T \dd t
    &\ge \E \int_T^\infty |W^1_t-W^2_t|^2_\T \dd t  \\
    &\ge \mathbb{P}[W^1_T-W^2_T=\pi L] c\, (\pi L)^2 \int_0^\infty
      e^{-2t/L^2} \dd t \\
    &\ge \frac{z-az}{\pi L - az}
      c\, (\pi L)^2 \frac{L^2}{2} \\
    &\ge C_a z
  \end{align*}
  for a constant $C_a$ only depending on $a$ and $L$.

  On the other hand, integrating the assumed bound gives
  \begin{equation*}
    \E \int_0^\infty |W^1_t-W^2_t|^2_\T \dd t
    \le (\alpha(z))^2 \int_0^\infty \beta(t) \dd t
    \le (\alpha(z))^2 \norm{\beta}_{L^1([0,\infty))}.
  \end{equation*}
  Hence
  \begin{equation*}
    C_a z \le (\alpha(z))^2 \norm{\beta}_{L^1([0,\infty))}
  \end{equation*}
  which is the claimed result.
\end{proof}

\bibliographystyle{hplain}
\bibliography{lit}
\end{document}